\documentclass[12pt]{amsart}

\usepackage{amsmath,amssymb,amsthm,amsfonts,verbatim,comment}
\usepackage{xcolor}
\usepackage{amsmath,amssymb,amsthm,amsfonts}
\usepackage{graphicx} % Required for inserting images

\usepackage[
pdfborderstyle={},
pdfborder={0 0 0},
pagebackref,
pdftex]{hyperref}

\theoremstyle{plain}
\newtheorem{theorem}{Theorem}[section]

\newtheorem{lemma}[theorem]{Lemma}

\newtheorem{corollary}[theorem]{Corollary}

 % Automorphism group
 % Automorphism group
\newcommand{\QI}{\operatorname{QI}}
\newcommand{\Map}{\operatorname{Map}}
\newcommand{\Sep}{\operatorname{Sep}}

\newcommand{\calA}{\mathcal{A}}

\newcommand{\calC}{\mathcal{C}}

\newcommand{\calD}{\mathcal{D}}

\newcommand{\calFS}{\mathcal{FS}}

\newcommand{\ZZ}{\mathbb{Z}}

\DeclareMathOperator{\id}{id}

\DeclareMathOperator{\NonSep}{NonSep}

\title{Hyperbolic spaces not quasi-isometric to curve complexes}
\author{Javier Aramayona}
\thanks{JA was supported by grant PGC2018-101179-B-I00, and acknowledges financial support from the Spanish Ministry of Science and Innovation, through the “Severo Ochoa Programme for Centres of Excellence in R\&D (CEX2019-000904-S and CEX2023-001347-S)”}

\email{javier.aramayona@icmat.es}
\author{Hugo Parlier}
\email{hugo.parlier@unifr.ch}
\author{Richard Webb}
\email{richard.webb@manchester.ac.uk}

\date{\today}

\begin{document}

\begin{abstract}
We identify a condition that prevents a hyperbolic space from being quasi-isometric to the curve complex of any non-sporadic surface. Our result applies to several hyperbolic complexes, including arc complexes, disk complexes, non-separating curve complexes,  (hyperbolic) pants complexes, and to free splitting complexes of free groups.
\end{abstract}

\maketitle

\section{Introduction}
A highly successful approach to the study of mapping class groups has been through the use of the various simplicial complexes built from curves and arcs on surfaces, with the curve complex as a central example. A celebrated theorem of Masur~and~Minsky \cite{MasurMinsky} asserts that the curve complex is (Gromov) hyperbolic, a result that has deep implications for mapping class groups \cite{BKMM, BehrstockMinsky}, Teichm\"uller spaces \cite{Brock,EMRlarge,Rafi}, and hyperbolic 3-manifolds \cite{Brock, BCM, MinskyHyp}. Since then, a lot of effort has been put into analyzing geometric properties of these complexes, and in particular their (non-) hyperbolicity. 

 The quasi-isometric rigidity of mapping class groups and curve complexes was established respectively in \cite{BKMM} and \cite{RS}. However, while there are several families of complexes defined on surfaces which are hyperbolic, it is not immediately obvious whether or not they are quasi-isometric.

The purpose of this note is to prove a result that prevents a hyperbolic space from being quasi-isometric to a curve complex. Write $S_{g,n}$ for the connected orientable surface of genus $g$ and with $n$ punctures, and recall that its {\em complexity} is the number $\xi(S_{g,n})= 3g-3+n$. We will say that a surface $S$ is {\em non-sporadic} if $\xi(S) \ge 2$. We will show: 

\begin{theorem}
Suppose that a hyperbolic geodesic metric space $X$ admits two isometries $F$ and $G$ such that $F$ is loxodromic, $G$ is not, and $\ZZ^2 \cong \langle F,G \rangle \subseteq \QI(X)$. Then $X$ is not quasi-isometric to the curve complex of any non-sporadic surface.
\label{thm:main}
\end{theorem}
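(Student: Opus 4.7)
The plan is to argue by contradiction: suppose that a quasi-isometry $\phi\colon X \to \calC(S)$ exists for some non-sporadic $S$. Conjugation by $\phi$ induces an isomorphism $\Phi\colon \QI(X) \to \QI(\calC(S))$. By the quasi-isometric rigidity of curve complexes of Rafi--Schleimer \cite{RS}, $\QI(\calC(S))$ is isomorphic to the extended mapping class group $\Mod^{\pm}(S)$ throughout the non-sporadic range. Because loxodromicity of a quasi-action on a Gromov-hyperbolic space is preserved under conjugation by quasi-isometries, $\Phi(F)$ acts loxodromically on $\calC(S)$ while $\Phi(G)$ does not; moreover, $\Phi(F)$ and $\Phi(G)$ still commute and generate a copy of $\ZZ^2$ inside $\Mod^{\pm}(S)$.

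By the theorem of Masur--Minsky \cite{MasurMinsky}, an element of $\Mod^{\pm}(S)$ acts loxodromically on $\calC(S)$ if and only if it is pseudo-Anosov. Consequently $\Phi(F)$ is pseudo-Anosov while $\Phi(G)$ is not; by the Nielsen--Thurston classification, $\Phi(G)$ is therefore either periodic or reducible.

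The next ingredient is the classical fact that the centralizer of a pseudo-Anosov in $\Mod^{\pm}(S)$ is virtually cyclic: any mapping class commuting with $\Phi(F)$ must preserve its pair of invariant projective measured foliations, and a short argument using the stretch factor then shows that the centralizer is virtually generated by a single pseudo-Anosov root of $\Phi(F)$. In particular, every infinite-order element of the centralizer is itself pseudo-Anosov. Since $\Phi(G)$ has infinite order (as $\langle F,G\rangle\cong\ZZ^2$) but is not pseudo-Anosov, we have the desired contradiction.

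The step that will require the most care is the first: one must confirm that the Rafi--Schleimer identification $\QI(\calC(S)) \cong \Mod^{\pm}(S)$, together with its compatibility with the loxodromic/elliptic dichotomy, is available for every non-sporadic $S$, including the low-complexity surfaces where the statement is subtle (e.g.\ $S_{1,2}$ and $S_{0,5}$, whose curve complexes are isomorphic to arc complexes of other surfaces). In those exceptional cases, however, what we ultimately need---that $\QI(\calC(S))$ cannot contain a $\ZZ^2$ generated by a loxodromic and a non-loxodromic element---reduces to a concrete statement about centralizers of pseudo-Anosovs in a specific mapping class group and can be verified directly.
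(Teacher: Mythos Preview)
Your proposal is correct and follows essentially the same approach as the paper: argue by contradiction, push $F$ and $G$ through Rafi--Schleimer rigidity to obtain commuting mapping classes, use Masur--Minsky to see that the image of $F$ is pseudo-Anosov while that of $G$ is infinite-order non-pseudo-Anosov, and derive a contradiction from the structure of centralizers; both treatments also flag $S_{1,2}$ as requiring separate handling. The only cosmetic difference is that the paper obtains the final contradiction via the canonical reduction system of the reducible element (which $\Phi(F)$ would have to fix), whereas you invoke the equivalent fact that the centralizer of a pseudo-Anosov is virtually cyclic.
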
 

As a consequence, we will deduce the following:

\begin{corollary}
Suppose that $g\ge 1$, $n \ge 1$, and $(g,n) \ne (1,1)$. Then, the arc complex $\mathcal A(S_{g,n})$ is not quasi-isometric to the curve complex of a non-sporadic surface.
\label{cor:arcs}
\end{corollary}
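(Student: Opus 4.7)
The plan is to invoke Theorem~\ref{thm:main} by producing, for each admissible $(g,n)$, two commuting isometries $F,G$ of $\mathcal A(S_{g,n})$ with $F$ loxodromic, $G$ non-loxodromic, and $\langle F,G\rangle \cong \ZZ^2$ inside $\QI(\mathcal A(S_{g,n}))$. Let $c$ be a nonseparating simple closed curve on $S=S_{g,n}$; capping the two new boundary components of $S\setminus c$ by punctures gives $S'\cong S_{g-1,n+2}$, of complexity $\xi(S')=3g+n-4\ge 1$ by the assumption $(g,n)\ne(1,1)$. Hence $S'$ admits a pseudo-Anosov; after passing to a suitable power so that every puncture of $S'$ is fixed, let $F$ denote its extension by the identity to $S$, and set $G=T_c$. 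Since $F$ is supported in the complement of a neighborhood of $c$, the elements $F$ and $G$ commute in $\Mod(S)$.

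That $F$ is loxodromic would follow from acylindricity of the $\Mod(S)$-action on the hyperbolic complex $\mathcal A(S)$: were $F^k$ to fix an arc $\alpha$, then $\alpha\cap S'$ would be a nonempty $F^k$-invariant multi-arc on $S'$ (nonempty because $\alpha$ has its endpoints at punctures, all of which lie in $S'$), and a regular neighborhood of this multi-arc would yield an $F^k$-invariant essential simple closed curve on $S'$, contradicting the pseudo-Anosov property. That $G=T_c$ is not loxodromic is immediate, as it fixes every arc disjoint from $c$.

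It remains to establish $\langle F,G\rangle\cong \ZZ^2$ inside $\QI(\mathcal A(S))$. Suppose $F^aG^b$ has bounded displacement for some $(a,b)\ne(0,0)$. If $a\ne 0$, then $F^aG^b$ asymptotically tracks the loxodromic $F^a$ (translated a bounded amount by the elliptic $G^b$) and is itself loxodromic, a contradiction. If $a=0$, one must show $T_c^b$ has unbounded displacement for $b\ne 0$; this is the main technical step. I would invoke the bounded geodesic image theorem of Masur--Minsky applied to the annular subsurface around $c$: since $d_{\mathcal A(A_c)}(\pi_{A_c}(\alpha),\pi_{A_c}(T_c^b\alpha))=|b|$, taking $|b|$ larger than the BGI threshold forces any $\mathcal A(S)$-geodesic from $\alpha$ to $T_c^b\alpha$ to pass within bounded distance of an arc disjoint from $c$, so that
\[
d_{\mathcal A(S)}(\alpha,T_c^b\alpha)\;\gtrsim\; 2\, d_{\mathcal A(S)}\bigl(\alpha,\mathcal A(S\setminus c)\bigr),
\]
which is unbounded as $\alpha$ varies in $\mathcal A(S)$. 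Thus $T_c^b$, and therefore $T_c$ itself, is nontrivial in $\QI(\mathcal A(S))$, and Theorem~\ref{thm:main} applies to yield the corollary.
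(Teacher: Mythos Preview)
Your overall setup coincides with the paper's: $W=S\setminus c$ is a proper witness for $\calA(S)$ (it contains every puncture), $F$ is a partial pseudo-Anosov supported on $W$, and $G=T_c$; this is exactly the instance of Corollary~\ref{cor:witness} treated in \S3.1. Two of your supporting arguments, however, do not go through as written.

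First, the action of $\Mod(S)$ on $\calA(S)$ is \emph{not} acylindrical: the very pair you construct witnesses its failure, since every $T_c^k$ fixes each vertex of $\calA(S\setminus c)$ while $F$ has an unbounded orbit inside that same set, so arbitrarily far-apart points are simultaneously fixed by infinitely many group elements. You therefore cannot invoke acylindricity to exclude parabolics. Even granting an elliptic/loxodromic dichotomy, your argument only shows that no power $F^k$ fixes a vertex; in a non--locally-finite complex this does not preclude a bounded (infinite) orbit. The route the paper takes is the correct one: since $W=S\setminus c$ is a witness, the subsurface projection $\pi_W\colon\calA(S)\to\calA(W)$ is coarsely Lipschitz, and $F$ acts loxodromically on $\calA(W)$ because it is pseudo-Anosov there; hence $F$ is loxodromic on $\calA(S)$.

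Second, in your $a=0$ case you assert that $d_{\calA(S)}\bigl(\alpha,\calA(S\setminus c)\bigr)$ is unbounded without justification. This is true, and the quickest proof is to use the coarsely Lipschitz map $\calA(S)\to\calC(S)$ (send an arc to a boundary component of a regular neighbourhood): it carries $\calA(S\setminus c)$ into the $1$--ball about $c$, while its image in $\calC(S)$ is unbounded, so arcs projecting far from $c$ in $\calC(S)$ lie far from $\calA(S\setminus c)$ in $\calA(S)$. With this in hand your BGI argument (for $|b|$ above the threshold, hence for all $b\ne 0$ after passing to powers) does show $T_c$ has infinite order in $\QI(\calA(S))$---a verification the paper's proof of Corollary~\ref{cor:witness} in fact leaves implicit, so your attention to this point is well placed even if the execution needs patching.
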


We remark that there is natural inclusion of $\mathcal A(S_{g,n})$ into the {\em arc-and-curve complex} $\mathcal{AC}(S_{g,n})$ --itself quasi-isometric to the curve complex $\mathcal C(S_{g,n})$-- which is not a quasi-isometry unless $g=0$ or $(g,n) = (1,1)$ \cite[Lemma 5.11]{MasurSchleimer}. The above corollary asserts that, in fact, {\em no quasi-isometry exists}; this answers negatively a question that we learned from Saul Schleimer. 
 
In addition, we will obtain a large list of hyperbolic complexes of curves and/or arcs that cannot be quasi-isometric to curve complexes. More concretely: 

\begin{corollary}
\label{cor:main}
The following hyperbolic complexes are not quasi-isometric to the curve complex of a non-sporadic surface:
\begin{enumerate}
    \item The pants complex $\mathcal P(S)$ and the cut-system complex $\mathcal H(S)$, when $\xi(S) = 2$; 
    \item The separating curve complex $\Sep(S_{g,n})$, where $g\ge 1$ and $n\ge 3$; 
    \item The non-separating curve complex $\NonSep(S_{g,n})$, for $n\ge 2$;
    \item The disk complex $\mathcal D(S_{g,n})$, where $n\ge 3$; 
    \item The free splitting complex $\mathcal{FS}_n$ of a free group of rank $n\ge 3$.
\end{enumerate}
\end{corollary}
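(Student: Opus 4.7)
The plan is, for each of the five hyperbolic complexes $X$ in the list, to apply Theorem~\ref{thm:main} by exhibiting two commuting isometries $F, G$ of $X$ with $F$ loxodromic, $G$ not, and $\langle F, G\rangle \cong \ZZ^2 \subseteq \QI(X)$. In the surface cases (1)--(4), such a pair will come from the natural $\Mod(S)$-action on $X$; in case (5), from the $\mathrm{Out}(F_n)$-action on $\calFS_n$. Theorem~\ref{thm:main} then directly yields the non-quasi-isometry conclusion.

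\medskip

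The template in the surface cases is straightforward. Pick an essential simple closed curve $\alpha \subset S$ representing (part of) a vertex of $X$, and an essential subsurface $Y \subset S$ disjoint from $\alpha$ of complexity large enough to support a partial pseudo-Anosov $\phi \in \Mod(S)$ whose action on $X$ is loxodromic. Set $F := \phi$ and $G := T_\alpha$. Then $F$ and $G$ commute in $\Mod(S)$ (disjoint supports); $G$ fixes the vertex $\alpha$ of $X$ and so has bounded orbits, hence is not loxodromic; $F$ is loxodromic by design; and $\langle F, G\rangle \cong \ZZ^2$ because a loxodromic and an infinite-order elliptic isometry cannot share a non-trivial power. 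The numerical constraints in the corollary are precisely those needed to choose $\alpha$ and $Y$ compatibly with the combinatorics defining $X$: for instance, $\alpha$ must be separating and $Y$ must still contain separating curves of $S$ for $\Sep(S_{g,n})$; $\alpha$ must be disk-bounding and $Y$ must contain disk-bounding curves for $\calD(S_{g,n})$; and so on.

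\medskip

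Case (5) is more delicate. The surface template suggests taking as $F$ an analog of a partial pseudo-Anosov on a proper free factor, together with a centralizing Dehn-twist-like element $G$. Since fully irreducible outer automorphisms of $F_n$ have virtually cyclic centralizers in $\mathrm{Out}(F_n)$, $F$ must be chosen among loxodromic elements on $\calFS_n$ that are not fully irreducible. The existence of such an $F$, together with an infinite-order commuting partner $G$ that stabilizes a one-edge free splitting (and so acts elliptically on $\calFS_n$), is extracted from Handel--Mosher's classification of loxodromic outer automorphisms and their centralizers.

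\medskip

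The principal obstacle throughout is verifying loxodromicity of $F$ on the specific complex $X$. Partial pseudo-Anosovs are \emph{not} loxodromic on the full curve complex $\calC(S)$ (Masur--Minsky), which is exactly why Theorem~\ref{thm:main} fails for $\calC(S)$ and hence really does distinguish the complexes in the corollary from it. The requisite loxodromicity statements in cases (1)--(4) are established by subsurface-projection and bounded-geodesic-image arguments adapted to each complex: Brock's quasi-isometric model for $\calP(S)$ and $\mathcal H(S)$ at $\xi(S) = 2$, the Masur--Schleimer machinery for $\calD$, and analogous bounded-projection arguments for $\Sep$ and $\NonSep$. For $\calFS_n$, both loxodromicity and the required centralizer information come from the Handel--Mosher analysis.
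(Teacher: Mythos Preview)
Your template for cases (2), (3), and (4) contains a self-contradiction. You take $\alpha$ to be a vertex of $X$ and the subsurface $Y$ to be disjoint from $\alpha$, then ask for a partial pseudo-Anosov $F$ supported on $Y$ that acts loxodromically on $X$. But any mapping class supported on $Y$ fixes every curve disjoint from $Y$; in particular $F$ fixes the vertex $\alpha$, so $F$ is elliptic on $X$, not loxodromic. Concretely: for $\NonSep(S_{g,n})$ with $\alpha$ non-separating and $Y \subset S \setminus \alpha$ one has $F(\alpha)=\alpha$; for $\Sep(S_{g,n})$ with $\alpha$ separating the same occurs; likewise for the disk complex with $\alpha$ disk-bounding. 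The template escapes this only in case (1), where a single curve is merely \emph{part} of a vertex. The paper avoids the trap by reversing the roles: one first chooses a proper \emph{witness} $W$ for $X$ (a subsurface meeting every vertex), takes $F$ to be a pseudo-Anosov on $W$---loxodromicity of $F$ on $X$ then follows uniformly from the coarsely Lipschitz subsurface projection $X \to \calC(W)$, with no case-by-case bounded-geodesic-image argument needed---and sets $G = T_\gamma$ for $\gamma$ a boundary component of $W$. The curve $\gamma$ is typically \emph{not} a vertex of $X$ (for $\Sep$ one takes $W$ to be the complement of a non-separating curve, so $\gamma$ is non-separating; for $\NonSep$ one takes $W$ of full genus, so $\gamma$ is separating), but $\gamma$ is disjoint from some vertex, which suffices to make $G$ elliptic.

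For case (5) the paper does not go through the Handel--Mosher classification. Instead it uses the Hamenst\"adt--Hensel coarsely Lipschitz left inverses to the natural maps $\calA(S_{1,n-1}) \to \calD_n \to \calFS_n$, so the very same $F$ and $G$ built for the arc complex via a proper witness push forward to a loxodromic/elliptic commuting pair on $\calD_n$ and on $\calFS_n$. This sidesteps the need to locate, by hand, a loxodromic but non-fully-irreducible element of $\mathrm{Out}(F_n)$ with the required centraliser structure.
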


\subsection*{Acknowledgement}

We are grateful for the workshop Surfaces, Manifolds and Related Topics (SMaRT), held in Luxembourg in 2024, where this work started.

\section{Proof of Theorem~\ref{thm:main}}

For the sake of brevity, we will assume the reader is familiar with geodesic metric spaces, quasi-isometries, Gromov hyperbolicity, and mapping class groups. We refer to the monographs \cite{BH99,DK18,FarbMargalit} for a thorough treatment. 

Let $X$ be a geodesic metric space. We will write $\QI(X)$ for the {\em quasi-isometry group} of $X$, namely the group of all quasi-isometries of $X$ modulo the equivalence relation where $f \sim g\in\QI(X)$ if
\[\sup_{x\in X} d_X(f(x),g(x)) <\infty.\]
The following lemma is well known, see for example \cite[Exercise~8.23]{DK18}.

\begin{lemma} \label{lem:qiisom}
    Let $X$ and $Y$ be quasi-isometric geodesic metric spaces, via quasi-isometries $h\colon X \to Y$ and $\bar h \colon Y \to X$ that are {\em coarse inverses}, namely $h\circ \bar h \sim \id_Y $ and $\bar h \circ h \sim \id_X$. Then the map $\QI(X) \to \QI(Y)$  given by  \[f \mapsto \bar h\circ f \circ h \] is an isomorphism. We call it the \emph{induced isomorphism} between $\QI(X)$ and $\QI(Y)$. 
    %Moreover, the quality of the constants of the quasi-isometry $h\circ f \circ \bar h$ only depends on $h$, $f$, and $\bar h$. 
\end{lemma}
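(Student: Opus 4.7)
The plan is to verify three things about the induced assignment $\Phi\colon f\mapsto h\circ f\circ \bar h$: that it sends $\QI(X)$-equivalence classes to $\QI(Y)$-equivalence classes, that it is a group homomorphism, and that the opposite-direction assignment $\Psi\colon g\mapsto \bar h\circ g\circ h$ serves as a two-sided inverse. The whole argument will rest on two standard facts about quasi-isometries: the composition of two quasi-isometries is a quasi-isometry, and every quasi-isometry $h$ is coarsely Lipschitz, in the sense that a uniform bound on $d_X(x,x')$ forces a uniform bound on $d_Y(h(x),h(x'))$.

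First I would check well-definedness: $h\circ f\circ \bar h$ is a quasi-isometry of $Y$ whenever $f$ is a quasi-isometry of $X$, straight from the composition fact. If $f\sim f'$ in $\QI(X)$ with $d_X(f(x),f'(x))\le C$ uniformly in $x$, applying the coarse-Lipschitz property of $h$ propagates this to a uniform bound on $d_Y(\Phi(f)(y),\Phi(f')(y))$, so $\Phi$ descends to a map on equivalence classes. The homomorphism property will then hinge on the identity
\[
\Phi(f_1)\circ \Phi(f_2) \;=\; h\circ f_1\circ (\bar h\circ h)\circ f_2\circ \bar h,
\]
into which I would substitute the coarse-inverse hypothesis $\bar h\circ h\sim \id_X$ and absorb the resulting defect using the coarse-Lipschitz properties of $h$ and $f_1$, obtaining $\Phi(f_1f_2)$ up to $\sim$.

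Finally, the analogous computation, inserting $h\circ \bar h\sim \id_Y$ between the two factors, will give $\Phi\circ\Psi\sim \id_{\QI(Y)}$ and $\Psi\circ\Phi\sim \id_{\QI(X)}$, completing the proof that $\Phi$ is an isomorphism. I do not expect a genuine obstacle here: everything reduces to careful chaining of inequalities of the shape $d(f(x),f(y))\le K\,d(x,y)+C$ with the coarse-inverse identities, and the only substantive ingredient is the coarse-Lipschitz property itself; the rest is bookkeeping.
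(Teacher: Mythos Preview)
The paper does not actually prove this lemma; it simply cites it as a well-known fact and points to \cite[Exercise~8.23]{DK18}. Your outline is the standard argument and is correct: well-definedness follows from closure of quasi-isometries under composition and the coarse-Lipschitz property, the homomorphism check comes from inserting $\bar h\circ h\sim \id_X$ between the factors, and the inverse is given by conjugating in the other direction.

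One point worth noting: you have silently corrected a typo in the statement. As written in the paper, the assignment is $f\mapsto \bar h\circ f\circ h$, but for $f\colon X\to X$ and $h\colon X\to Y$ this composition does not even make sense. Your $\Phi(f)=h\circ f\circ \bar h$ is the correct formula for a map $\QI(X)\to\QI(Y)$, and your $\Psi(g)=\bar h\circ g\circ h$ is the correct inverse.
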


We say that a geodesic metric space $X$ is \emph{quasi-isometrically rigid} if, for every $f \in \QI(X)$, there exists an isometry $g:X\to X$ such that $f \sim g$. Rafi~and~Schleimer proved the following (see Theorem~7.1 and Corollary~1.1 of \cite{RS}):

\begin{theorem} \label{thm:RS}
    Let $S$ be a surface with $\xi(S)\geq 2$. Then the curve complex $\calC(S)$ is quasi-isometrically rigid. Moreover, any quasi-isometry of $\calC(S)$ is within a bounded distance from a unique isometry. \hfill $\square$
\end{theorem}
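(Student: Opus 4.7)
The plan is to combine two classical ingredients: that a quasi-isometry of a hyperbolic space induces a homeomorphism of the Gromov boundary, and that the simplicial automorphism group of $\calC(S)$ is (essentially) the extended mapping class group $\Mod^{\pm}(S)$ by the theorems of Ivanov, Korkmaz and Luo. Given $f\in\QI(\calC(S))$, I would first extend it to a self-homeomorphism $\partial f$ of $\partial\calC(S)$. By Klarreich's theorem, this boundary is naturally identified with the space of ending laminations $\mathcal{EL}(S)$, so we obtain a self-homeomorphism of $\mathcal{EL}(S)$.

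The heart of the proof is to promote $\partial f$ to a simplicial automorphism of $\calC(S)$. To each vertex $v\in \calC(S)$ I would associate a canonical subset $\Lambda(v)\subseteq \mathcal{EL}(S)$ defined intrinsically from the coarse geometry of $\calC(S)$ --- for example as the accumulation set in $\partial\calC(S)$ of a suitable family of cobounded Teichm\"uller geodesics whose $\calC(S)$-shadows pass near $v$, equivalently characterised through subsurface projections. The key coarse-geometric claim is that $v\mapsto \Lambda(v)$ is injective and that the collection $\{\Lambda(v):v\in\calC(S)\}$ is preserved by every boundary homeomorphism induced by a quasi-isometry of $\calC(S)$. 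Granting this, $\partial f$ induces a bijection of the vertex set, and adjacency --- which is itself decodable from the way the sets $\Lambda(v)$ interact on the boundary --- is preserved. Hence $\partial f$ descends to a simplicial automorphism of $\calC(S)$, and Ivanov's theorem (with the low-complexity additions of Korkmaz and Luo) provides $\phi\in\Mod^{\pm}(S)$ realising it. The induced isometry $\phi_*$ of $\calC(S)$ then agrees with $f$ at infinity, and a standard argument in hyperbolic geometry (two coarse maps that agree at infinity are at bounded distance) yields $f\sim\phi_*$, proving quasi-isometric rigidity.

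Uniqueness of the approximating isometry follows from the existence of loxodromic isometries of $\calC(S)$ (e.g.\ pseudo-Anosovs) with distinct fixed points in $\partial\calC(S)$: if $\phi_*$ and $\psi_*$ are both within bounded distance of $f$, then $\phi_*^{-1}\psi_*$ is an isometry within bounded distance of $\id$, hence must fix the endpoints of every loxodromic axis, so acts trivially on $\partial\calC(S)$ and therefore on $\calC(S)$. The main obstacle is the middle paragraph: reconstructing a simplicial automorphism from the topological dynamics of $\partial f$ on $\mathcal{EL}(S)$. The space $\mathcal{EL}(S)$ is topologically wild and not locally compact, so producing a canonical curve-to-boundary assignment $\Lambda$ that is demonstrably intrinsic to the coarse geometry of $\calC(S)$ is delicate and is where the Rafi--Schleimer apparatus of cobounded geodesics and hierarchy-style projections does its real work.
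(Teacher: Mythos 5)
The paper does not prove this statement: it is imported verbatim from Rafi--Schleimer (Theorem~7.1 and Corollary~1.1 of \cite{RS}), and the terminal $\square$ marks it as a quoted external theorem. So the honest comparison is between your sketch and the actual argument of \cite{RS}. Your roadmap does match that argument in outline --- a quasi-isometry of a hyperbolic space extends to its Gromov boundary, Klarreich identifies $\partial\calC(S)$ with the ending lamination space, one reconstructs a simplicial automorphism from the boundary data, and Ivanov--Korkmaz--Luo converts that into a mapping class whose induced isometry is at bounded distance from $f$. But as written this is a plan, not a proof: the ``key coarse-geometric claim'' in your middle paragraph (that each vertex $v$ has a canonically attached subset $\Lambda(v)\subseteq\mathcal{EL}(S)$ which is injective in $v$, preserved by every boundary map of a quasi-isometry, and from which adjacency can be decoded) is precisely the content of the entire Rafi--Schleimer paper, and you assert it rather than establish it. Note also that ``geodesics whose shadows pass near $v$'' is not a quasi-isometry-invariant condition on the nose, so exact injectivity and exact equivariance of $v\mapsto\Lambda(v)$ cannot hold as stated; Rafi--Schleimer have to work coarsely (obtaining first a coarse bijection of vertices respecting the subsurface/cobounded structure) and then upgrade to an honest simplicial automorphism. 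Without that work, nothing is proved.

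Two further points. First, in your uniqueness argument the passage from ``acts trivially on $\partial\calC(S)$'' to ``acts trivially on $\calC(S)$'' is not automatic for an isometry of a hyperbolic space (bounded spaces, or trees with leaves, give counterexamples); for $\calC(S)$ it again requires identifying the isometry with a simplicial automorphism and hence, via Ivanov--Korkmaz--Luo, with a mapping class, and then arguing that a mapping class fixing all stable/unstable laminations of pseudo-Anosovs is central, so that the induced \emph{isometry} is the identity (the hyperelliptic exceptions act trivially on $\calC(S)$ and so do not obstruct uniqueness of the isometry). Second, the application of this theorem later in the paper (Lemma~\ref{lem:type}) quietly uses a quantitative refinement --- that the distance from a quasi-isometry to its approximating isometry depends only on the quasi-isometry constants --- which your sketch does not address; that uniformity is also part of what \cite{RS} actually delivers.
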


Let $X$ be a geodesic metric space. An isometry $f\colon X\to X$ is said to be {\em elliptic} if it has an orbit of bounded diameter. It is {\em loxodromic} if its {\em asymptotic translation length (or stable length)} 
 \[|f| = \lim_{n\rightarrow \infty} \frac{1}{n}d_X(x, f^n(x))\] is positive. The limit above exists by a well-known application of Fekete's Lemma, and the limit is independent of the choice of $x\in X$. It is also well known that an isometry is loxodromic if and only if there is some $\alpha>0$ such that for some (or, equivalently, any) $x\in X$, we have $d_X(x,f^n(x))\geq \alpha |n|$ for all $n\in\mathbb{Z}$. We say that $f$ is \emph{parabolic} if $f$ is neither loxodromic nor elliptic.

Suppose $X$ is a geodesic metric space quasi-isometric to the curve complex $\calC(S)$ of some surface with $\xi(S)\geq 2$, and fix a quasi-isometry $h\colon X\to \calC(S)$ with coarse inverse $\bar h$. If $f\colon X \to X$ is an isometry then, by Theorem~\ref{thm:RS},  $h\circ f \circ \bar h$ is within a bounded distance from some unique isometry $g_f$ of $\calC(S)$. With this notation, we have: 

\begin{lemma} \label{lem:type}
 $f$ is loxodromic if and only if $g_f$ is loxodromic.
\end{lemma}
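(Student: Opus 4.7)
The plan is to translate the question into one about dynamics on Gromov boundaries. Both $X$ and $\calC(S)$ are Gromov hyperbolic, so they carry well-defined Gromov boundaries, and $h$ extends to a homeomorphism $\partial h : \partial X \to \partial \calC(S)$, with inverse induced by $\bar h$. A key input from boundary theory is that any quasi-isometry of a hyperbolic space induces a boundary homeomorphism depending only on its $\QI$-equivalence class. In particular, since $g_f$ and $h \circ f \circ \bar h$ differ by bounded distance, they induce the same boundary homeomorphism, namely $\partial h \circ \partial f \circ (\partial h)^{-1}$.

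Next I would invoke the standard classification of isometries of Gromov hyperbolic spaces by boundary dynamics: an isometry $\phi$ is loxodromic if and only if it acts on the boundary with \emph{North-South dynamics}, i.e.\ with two distinct fixed points $\xi^\pm$, one attracting and one repelling, such that $\phi^n(\eta) \to \xi^+$ for every $\eta \neq \xi^-$. Parabolic isometries fix exactly one boundary point, while elliptic isometries have bounded orbits (incompatible with NS dynamics), so NS dynamics characterizes loxodromy among isometries.

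With both inputs in hand, the proof is nearly immediate. Suppose $f$ is loxodromic; then $\partial f$ has NS dynamics with fixed points $\xi^\pm \in \partial X$. Topological conjugation by the homeomorphism $\partial h$ preserves NS dynamics, so $\partial g_f = \partial h \circ \partial f \circ (\partial h)^{-1}$ has NS dynamics on $\partial \calC(S)$ with fixed points $\partial h(\xi^\pm)$. Therefore $g_f$ is loxodromic. The converse follows by the symmetric argument, swapping the roles of $h$ and $\bar h$.

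The main technical hurdle is justifying the boundary extension of quasi-isometries and the classification of isometries via boundary dynamics in the non-proper setting of $\calC(S)$; these are standard, but must be invoked with some care. By contrast, a naive translation-length approach runs into trouble: one can show $d(g_f^n(y), h(f^n(x))) \leq n C_1$ for a constant $C_1$ depending only on the quasi-isometry data, giving only $|g_f| \geq |f|/K - C_1$, which is too weak when $|f|$ is small. Passing to $f^M$ does not improve matters, since the relevant constant scales proportionally with $M$. The boundary approach bypasses this difficulty cleanly.
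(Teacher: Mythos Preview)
Your approach is correct and genuinely different from the paper's. The paper argues via translation length: since $f^n$ is an isometry for every $n$, the quasi-isometry constants of $h\circ f^n\circ\bar h$ are independent of $n$, and the Rafi--Schleimer bound between a quasi-isometry and its approximating isometry depends only on those constants; combined with the uniqueness clause of Theorem~\ref{thm:RS} (which forces $g_{f^n}=g_f^{\,n}$), this yields $d_X(x,f^n x)\approx d_{\calC(S)}(h(x),g_f^n h(x))$ with constants independent of $n$, so the types match. The paper also invokes Ivanov--Korkmaz--Luo and Nielsen--Thurston to rule out parabolics on the $\calC(S)$ side. Your boundary-dynamics argument avoids both of these inputs and in fact proves the more general statement that loxodromy is a quasi-isometry invariant of isometries of arbitrary hyperbolic spaces, at the cost of relying on the characterisation ``loxodromic $\Leftrightarrow$ North--South dynamics on the boundary'' in the non-proper setting (true, but requiring some care, as you note). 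One correction to your commentary: the translation-length approach does \emph{not} run into the linear-error problem you describe, precisely because of the uniformity in Rafi--Schleimer --- the bound $d(g_f^{\,n}(y),h\circ f^n\circ\bar h(y))$ is independent of $n$, not $nC_1$. That is exactly the mechanism the paper exploits.
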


\begin{proof}
By Lemma~\ref{lem:qiisom}, for every integer $n\in \ZZ$, the map $h\circ f^n \circ 
\bar h$ is an element of $\QI(\calC(S))$. Moreover, by Rafi--Schleimer's Theorem~\ref{thm:RS} above, $h$ is within a bounded distance from a \emph{unique} isometry $g_n: \calC(S) \to \calC(S)$. The uniqueness of $g_n$ implies that $g_n=g_1^n=g_f^n$.

Now, we claim that $g_f$ is either loxodromic or elliptic. To see this, suppose first that $\xi(S) \ge 2$ and $S\ne S_{1,2}$. Since $g_f$ is an isometry, work of Ivanov \cite{Ivanov}, Korkmaz \cite{Korkmaz}, and Luo \cite{Luo} implies that $g_f$ is induced by a unique element of the mapping class group $\Map(S)$, which, by slightly abusing notation, is also denoted by $g_f$. Now, the Nielsen--Thurston classification (see, e.g., \cite{FarbMargalit}) implies that $g_f$ is either periodic, reducible, or pseudo-Anosov. If $g_f$ is pseudo-Anosov,  Masur--Minsky \cite{MasurMinsky} proved that $g_f$ acts loxodromically on $\calC(S)$. If, on the other hand, $g_f$ is reducible or periodic,  then it acts  elliptically on $\calC(S)$. Hence the claim follows. 

Now, $f^n$ is an isometry for all $n\in \mathbb{Z}$, and hence the quasi-isometry constants of $h\circ f^n \circ 
\bar h$ do not depend on $n$. It is implicit in Theorem~\ref{thm:RS} and the work in \cite{RS}, that every quasi-isometry is not only a bounded distance from an isometry, but that the distance bound only depends on the quasi-isometry constants (and the underlying surface $S$). We therefore have that $h\circ f^n \circ \bar h$ is at most a uniform distance (independent of $n$) away from $g_f^n$. This, together with the fact that $h$ and $\bar h$ are coarse inverses, yields: 
\[d_X(x,f^n(x)) \approx d_{X}(\bar h \circ h (x), \bar h \circ g_f^n \circ h(x)) \approx d_{\calC(S)}(h(x), g_f^n\circ h(x)), \]
where $\approx$ denotes equality up to uniform additive and multiplicative errors. In particular, $f$ and $g_f$ are either both loxodromic, or both elliptic, i.e. they have the same \emph{type}. This concludes the proof for $S\neq S_{1,2}$.

    When $S= S_{1,2}$, there are automorphisms of $\calC(S_{1,2}$) that are not induced by mapping classes. Instead, we make use of the fact that $\calC(S_{1,2})$ and $\calC(S_{0,5})$ are isomorphic (via the {\em hyperelliptic involution}, see \cite{Luo}), and run the same argument as above.\end{proof}

We are finally in a position to prove the main result of this note. 

\begin{proof}[Proof of Theorem~\ref{thm:main}]
We may suppose that $X$ has infinite diameter, otherwise the result is immediate. Suppose, by contradiction, that there is a quasi-isometry $h \colon X \to \calC(S)$, where $\xi(S)\geq 2$, and fix a coarse inverse $\bar h$ for $h$. Then $h\circ F \circ \bar h$ and $h \circ G \circ \bar h$ are both elements of $\QI(\calC(S))$. By \cite[Theorem~7.1]{RS}, they are within a bounded distance from unique isometries $F'$ and $G'$ of $\calC(S)$, respectively, which commute and have infinite order by Lemma~\ref{lem:qiisom}.

Now, Lemma~\ref{lem:type} implies that $F'$ is loxodromic but $G'$ is not. If $\xi(S) \ge 2$ and $S\ne S_{1,2}$, then  $F'$ must be induced by a pseudo-Anosov mapping class, and $G'$ must be induced by an infinite-order reducible mapping class. But then $F'$ and $G'$ cannot commute. To see this, since $G'$ is of infinite order and reducible, its {\em canonical reduction system} \cite{BLM}  is a non-empty multicurve on $S$, which $F'$ must preserve since it commutes with $G'$.  This shows that $F'$ is elliptic, which is a contradiction.

Finally, if  $S=S_{1,2}$, then we use again that $\calC(S_{1,2})$ and $\calC(S_{0,5})$ are isomorphic, and so we can run the same argument as above. 
\end{proof}

 \section{Applications}
 In this section we prove that the complexes that appear in Corollary~\ref{cor:main} are not quasi-isometric to the curve complex of any non-sporadic surface.
  There is a common strategy behind the proofs, which we will phrase in terms of {\em combinatorial complexes}; there are  simplicial complexes whose simplices are given by uniformly finite sets of arcs and/or curves with uniformly bounded intersection.
Following Masur--Schleimer \cite{MasurSchleimer}, a {\em witness} for a geometric complex $X(S)$ is a subsurface $Y \subset S$ such that every vertex of $X(S)$ intersects $Y$. If $Y\neq S$, we say that $Y$ is a \emph{proper} witness. The following is a direct corollary of the work of Masur--Minsky \cite{MasurMinsky}: 

\begin{lemma}
  Let $S$ be a finite-type surface and $X(S)$ a geometric complex invariant under a subgroup $H<\Map(S)$. Suppose that $W$ is a witness for $X(S)$, and that $F\in H$  preserves $W$ and  its restriction to $W$ is pseudo-Anosov. Then $F$ acts loxodromically on $X(S)$. 
\end{lemma}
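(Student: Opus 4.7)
The plan is to transfer the loxodromic behaviour of $F|_W$ on $\calC(W)$ to $X(S)$ by means of a coarsely Lipschitz, $F$-equivariant subsurface projection $\pi_W \colon X(S) \to \calC(W)$.

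First, I would set up the projection. Since $W$ is a witness, every vertex $v$ of $X(S)$ has essential intersection with $W$, so the Masur--Minsky subsurface projection of $v$ to $\calC(W)$ (extended to arcs as in Masur--Schleimer \cite{MasurSchleimer}) is a nonempty set of uniformly bounded diameter; pick one representative and call it $\pi_W(v)$. The key equivariance is immediate from the hypothesis that $F$ preserves $W$: for any vertex $v$, $\pi_W(F \cdot v)$ coincides, up to the bounded ambiguity in the choice of representative, with $F|_W \cdot \pi_W(v)$.

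Second, I would check that $\pi_W$ is coarsely Lipschitz on the $1$-skeleton of $X(S)$. Because $X(S)$ is a combinatorial complex in the sense introduced just before the lemma, any two adjacent vertices $u, v$ have geometric intersection bounded by a uniform constant $K$. The standard Bounded Geodesic Image / subsurface projection estimates from \cite{MasurMinsky} then imply that $d_{\calC(W)}(\pi_W(u), \pi_W(v))$ is bounded by a constant depending only on $K$; extending linearly to paths, $\pi_W$ is coarsely Lipschitz on all of $X(S)$.

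Third, I would combine these two ingredients with Masur--Minsky's theorem that a pseudo-Anosov acts loxodromically on the curve complex of its supporting surface. Fix a vertex $v_0$ of $X(S)$. Since $F|_W$ is pseudo-Anosov, there exist $\alpha>0$ and $C \geq 0$ with
\[ d_{\calC(W)}\bigl(\pi_W(v_0),\, (F|_W)^n \cdot \pi_W(v_0)\bigr) \geq \alpha|n| - C \]
for all $n \in \ZZ$. By equivariance, the left-hand side equals $d_{\calC(W)}(\pi_W(v_0), \pi_W(F^n(v_0)))$ up to a bounded error, and by the coarsely Lipschitz property this is bounded above by a linear function of $d_{X(S)}(v_0, F^n(v_0))$. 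Rearranging yields a positive linear lower bound on $d_{X(S)}(v_0, F^n(v_0))$ in $|n|$, so $F$ acts loxodromically on $X(S)$.

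The main obstacle is verifying the coarse Lipschitz property of $\pi_W$ uniformly across the various complexes of interest, particularly when vertices of $X(S)$ are arcs or multicurves rather than simple closed curves; this is precisely why the hypothesis restricts to combinatorial complexes with uniformly bounded intersection, and the argument is the technical content imported from \cite{MasurMinsky} (see also \cite{MasurSchleimer}).
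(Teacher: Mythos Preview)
Your argument is correct and is precisely the content the paper leaves implicit: the paper offers no proof of this lemma, merely declaring it a ``direct corollary of the work of Masur--Minsky \cite{MasurMinsky}''. Your subsurface-projection argument is exactly the standard unpacking of that sentence.

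One small quibble: the coarse Lipschitz property of $\pi_W$ does not require the Bounded Geodesic Image theorem. All you need is the elementary fact that if two (multi)curves or arcs have intersection number at most $K$, then their subsurface projections to $W$ have intersection number bounded in terms of $K$, and hence bounded distance in $\calC(W)$ (or $\calAC(W)$). This is Lemma~2.2/2.3-level material in \cite{MasurMinsky} rather than the deeper BGI theorem; citing BGI is harmless but slightly misleading about what is actually being used.
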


As an immediate consequence, one obtains: 

\begin{corollary}
In the situation above, suppose in addition that the witness $W$ is proper, and that the Dehn twist $G$ about some boundary component of $W$ belongs to $H$. Then $X(S)$ is not quasi-isometric to the curve complex of any non-sporadic surface.
 \label{cor:witness}
\end{corollary}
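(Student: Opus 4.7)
The plan is to verify the hypotheses of Theorem~\ref{thm:main} for $X = X(S)$ with the supplied isometries, and then invoke it. (If $X(S)$ is not hyperbolic, the conclusion is immediate, since curve complexes of non-sporadic surfaces are hyperbolic.) The preceding lemma already yields that $F$ acts loxodromically on $X(S)$. Since $F$ preserves $W$ and therefore permutes the finitely many boundary components of $W$, replacing $F$ by a positive power --- which is still loxodromic, with restriction to $W$ still pseudo-Anosov --- we may assume that $F$ fixes the boundary component $c$ of $W$ along which $G$ is the Dehn twist. Then $F$ commutes with $G = T_c$ in $\Map(S)$, and hence also in $\QI(X(S))$.

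Next, I would argue that $G$ is elliptic on $X(S)$. Since $W$ is a \emph{proper} witness with $\xi(W) \ge 1$ (which is forced by the existence of the pseudo-Anosov $F|_W$), one can exhibit a vertex $v_0 \in X(S)$ lying in the interior of $W$ and disjoint from $c$; in each of the concrete cases of Corollary~\ref{cor:main} such a vertex is easy to produce (for instance, a curve or arc entirely contained in the interior of $W$). Then $G \cdot v_0 = v_0$, so $G$ fixes $v_0$, and is therefore elliptic.

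It remains to verify that $\langle F, G \rangle \cong \ZZ^2$ inside $\QI(X(S))$. Any relation $F^a G^b \sim \id$ rearranges to $F^a \sim G^{-b}$; evaluating at $v_0$, the left-hand side displaces $v_0$ by at least $|a| \cdot |F| - O(1)$ thanks to the loxodromicity of $F$, while the right-hand side fixes $v_0$. This forces $a = 0$. The remaining case $G^b \sim \id$ with $b \neq 0$ is ruled out by showing that the Dehn twist $G$ has infinite order in $\QI(X(S))$: for each of the five families of complexes in Corollary~\ref{cor:main}, subsurface projection to the annulus about $c$ provides a vertex $v_1 \in X(S)$ crossing $c$ with $d_{X(S)}(v_1, G^n v_1) \to \infty$ as $|n| \to \infty$, so that $G^n \not\sim \id$ for any $n \neq 0$. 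An application of Theorem~\ref{thm:main} then completes the proof.

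The main obstacle is this final step: upgrading the fact that $G$ is of infinite order in $\Map(S)$ to the stronger statement that $G$ has infinite order in $\QI(X(S))$. This is the only place where the combinatorial geometry of the specific complex $X(S)$ really enters, and it must be verified case-by-case --- using subsurface projection estimates of Masur--Minsky type --- before the uniform mechanism of Theorem~\ref{thm:main} delivers the conclusion.
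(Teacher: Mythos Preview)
Your overall strategy coincides with the paper's: verify the hypotheses of Theorem~\ref{thm:main} and invoke it. The main difference is in how $G$ is shown to be elliptic. Rather than locating a vertex inside $W$ --- which is not always possible (a pants decomposition of $S$, for instance, can never lie in a proper subsurface) and in any case forces a case-by-case check --- the paper argues by a dichotomy on whether the annular neighbourhood $A_c$ of $c$ is itself a witness for $X(S)$. If $A_c$ \emph{is} a witness, then by \cite[Lemma~5.2]{MasurSchleimer} an orbit of $\langle F,G\rangle$ gives a quasi-isometrically embedded $\ZZ^2$ in $X(S)$, so $X(S)$ is not hyperbolic and we are done without invoking Theorem~\ref{thm:main} at all. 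If $A_c$ is \emph{not} a witness, then by definition some vertex of $X(S)$ is disjoint from $c$, hence fixed by $G=T_c$, so $G$ is elliptic. This handles all combinatorial complexes uniformly, with no appeal to the specific families in Corollary~\ref{cor:main}.

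You are right to flag the $\ZZ^2$-in-$\QI(X)$ hypothesis; the paper is equally terse on this point. Note, though, that your argument for $a=0$ is not quite complete as stated: from $F^a\sim G^{-b}$ one only gets $d(F^av_0,v_0)\le C$ with $C=\sup_x d(F^ax,G^{-b}x)$ depending on $(a,b)$, which yields $|a|\,|F|\le C$ rather than $a=0$. (On $X=\mathbb{R}$, every translation is loxodromic yet $\sim\id$.) One needs an extra ingredient --- for instance, that $X(S)$ has more than two points in its Gromov boundary, so that a loxodromic isometry cannot be bounded distance from an elliptic one --- before concluding $a=0$; the infinite-order-of-$G$ step then proceeds as you outline.
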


\begin{proof}
We may assume that the boundary component of $W$ chosen above cannot be a witness of $X(S)$, for otherwise an orbit of $\langle F,G\rangle$ would provide a quasi-isometric embedding of $\mathbb{Z}^2$ into $X(S)$ (Lemma~5.2 of \cite{MasurSchleimer}), implying that $X(S)$ is not hyperbolic, and therefore not quasi-isometric to any curve complex. So we can assume instead that there is some vertex of $X(S)$ which is disjoint from this boundary component, and is therefore fixed by the Dehn twist, proving that $G$ is elliptic. Therefore, the maps $F$ and $G$ satisfy the conditions of Theorem~\ref{thm:main} and the result follows. \end{proof}

In light of Corollary~\ref{cor:witness}, we now proceed to discuss each of the complexes listed in Corollary~\ref{cor:main}. 

\subsection{Arc complexes} The arc complex $\mathcal A(S)$ is $\Map(S)$-invariant and hyperbolic \cite{MasurSchleimer}. There is a natural inclusion $\mathcal A(S)$ into the arc-and-curve complex $\mathcal {AC}(S)$, which is a quasi-isometry exactly when the genus of $S$ is zero \cite{MasurSchleimer}. We remark that $\mathcal{AC}(S)$ is easily seen to be quasi-isometric to the curve complex $\calC(S)$. 

Therefore, we focus on the case when $S$ has positive genus and $\xi(S)\ge2$. In this case, the witnesses are precisely the subsurfaces that contain every puncture of $S$, and we take $W$ to be any proper such subsurface, and apply Corollary~\ref{cor:witness}.

\subsection{Pants complexes and cut-system complexes} We deal with the case of pants complexes, as the case of the cut-system complex is totally analogous.

The pants complex $\mathcal P(S)$ is $\Map(S)$-invariant, and is known to be Gromov hyperbolic if and only if $\xi(S)=2$ \cite{MasurSchleimer}. We can thus focus our attention to these two surfaces, for otherwise $\mathcal P(S)$ cannot be quasi-isometric to any curve complex.

Now, any non-annular subsurface of $S$ is a witness for $\mathcal P(S)$, and so it suffices to take $W$ to be any such proper subsurface.

\subsection{Separating curve complexes} The separating curve complex $\Sep(S)$ is $\Map(S)$-invariant, and is known to be Gromov-hyperbolic exactly when $S$ has at least three boundary components \cite{Vokes}. For these surfaces, one may take $W$ to be the complement of a non-separating curve (see Example~2.4 in \cite{Vokes}) and apply Corollary~\ref{cor:witness} again.  

\subsection{Non-separating curve complexes} The non-separating curve complex of $S_{g,n}$ is $\Map(S_{g,n})$-invariant, and is Gromov-hyperbolic whenever $g\ge 2$. When $n\le 1$, the natural inclusion into $\calC(S_{g,n})$ is a quasi-isometry. However, if $n\ge 2$, the witnesses are the subsurfaces whose genus is $g$ and therefore, in these cases, we can apply Corollary~\ref{cor:witness}, and deduce that such a non-separating curve complex cannot be quasi-isometric to any curve complex of a non-sporadic surface.

\subsection{Disk complexes and free-splitting complexes.} Let $\calD_r$ be the disk complex of $\#_r \mathbb S^2\times \mathbb S^1$, and $\calFS _r$ the free-splitting complex of the free group on $n$ elements. Writing $S=S_{1,r-1}$, there are natural $1$-Lipschitz maps $\calA(S)\to \calD_r \to \calFS _r$. To see this, observe that an essential arc $a$ of $S_{1,r-1}$ determines the essential disk $a\times [0,1]$ in $S\times [0,1]$; by doubling this disk we obtain a sphere inside $\#_r S^2\times S^1$ --since this manifold is homeomorphic to the double of  $S_{1,r-1}\times [0,1]$)--, and hence a vertex of the free splitting complex $\calFS _r$. 

Now, let $Y$ be a proper witness for $\calA(S)$, choose a pseudo-Anosov $F$ on $Y$, and let $G$ be the Dehn twist about some essential boundary component of $Y$. Then $F$ acts loxodromically on $\calA(S)$, while $G$ fixes some arc, and so acts elliptically on $\calA(S)$. Then $F$ and $G$ also define elements of the handlebody group, and elements of the outer automorphism group of $F_r$, by a similar $\times [0,1]$ or doubling construction.
    
Hamenst\"adt~and~Hensel \cite{HH} provided $1$-coarsely Lipschitz left inverses for the natural maps $\calA(S)\to \calD_r \to \calFS _r$. Therefore, the $\times[0,1]$ and doubling constructions are in fact quasi-isometric embeddings; in particular, $F$ acts loxodromically on these complexes (whereas we know $G$ acts elliptically already). We conclude as before: by Theorem~\ref{thm:main} that none of these complexes can be quasi-isometric to some curve complex of a non-sporadic surface.

\bibliography{bib}
\bibliographystyle{plain}
\vspace{0.5cm}

\end{document}